\newtheorem{thm}{Theorem}[section]
\newtheorem{lem}[thm]{Lemma}
\newtheorem{prop}[thm]{Proposition}
\newtheorem{quest}[thm]{Question}
\theoremstyle{definition}
\theoremstyle{remark}
\newtheorem{remk}[thm]{Remark}
\renewcommand{\phi}{\varphi}
\newcommand{\tM}{\widetilde{M}}
\newcommand{\tg}{\widetilde{g}}
\newcommand{\tf}{\widetilde{f}}
\newcommand{\area}{{\rm area}}
\newcommand{\Vol}{{\rm Vol}}
\newcommand{\Hbb}{ {\mathbb H}}
\newcommand{\Rbb}{{\mathbb R}}
\newcommand{\Nbb}{{\mathbb N}}
\newcommand{\Hbf}{{\mathbf H}}
\newcommand{\Isom}{{\rm Isom}}
\newcommand{\be}{ \begin{equation} }
\newcommand{\ee}{ \end{equation} }
\begin{document}
\title[Geodesics and nodal sets]{
Geodesics and nodal sets of Laplace eigenfunctions on hyperbolic manifolds}

\author{Chris Judge}
\thanks{The work of C. J. was supported in part by a Simons Collaboration Grant.}
\address{Indiana University, Bloomington}
\email{cjudge@indiana.edu}
\author{Sugata Mondal}
\address{Rawles Hall, Indiana University, Bloomington}
\email{sumondal@iu.edu}

\date{\today}

\maketitle

\begin{abstract}
Let $X$ be a manifold equipped with a complete Riemannian metric of constant
negative curvature and finite volume. We demonstrate the finiteness of the
collection of totally geodesic immersed hypersurfaces in $X$ 
that lie in the zero-level set of some Laplace eigenfunction. 
For surfaces, we show that the number can be 
bounded just in terms of the area of the surface. We also provide constructions
of geodesics in hyperbolic surfaces that lie in a nodal set but that do not lie
in the fixed point set of a reflection symmetry.
\end{abstract}

\section{Introduction}

The structure of the zero level set---or {\em nodal set}---of eigenfunctions of the
Laplacian on Riemannian manifolds is a topic of continuing interest \cite{Zelditch13}.
Among all manifolds, the surfaces of constant negative curvature remain an 
important testing ground \cite{GRS13}. Using a lemma of \cite{TthZld09}, Jung  
\cite{Jung14} showed that each closed curve of nonzero constant geodesic curvature 
intersects each nodal set in a finite set, and, moreover, estimated the size of 
the intersection in terms of the eigenvalue.\footnote{Only closed horocycles 
and boundaries of disks were considered in \cite{Jung14}, 
but the methods also apply to curves
consisting of points that are equidistant to a closed geodesic.} 

On the other hand, it is well-known that closed curves of zero geodesic 
curvature---geodesics---may lie in the zero level set of an eigenfunction. 
Indeed, if a closed Riemannian manifold admits a reflection self-isometry $r$, 
then the fixed point set of $r$ is totally geodesic and infinitely many 
eigenfunctions vanish on the fixed point set.  

Let $N(X)$ denote the cardinality of the collection of totally geodesic immersed 
hypersurfaces $M$ in $X$ such that there exists a Laplace eigenfunction  $\phi$ 
so that either $\phi|_M= 0$ or $\nu \phi =0$ for each vector $\nu$ normal to $M$.
In this note we prove

\begin{thm} \label{thm:main}
If $X$ is a finite volume hyperbolic manifold, then $N(X)$ is finite.   
\end{thm}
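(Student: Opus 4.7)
My plan is to convert the vanishing hypothesis on $\phi$ along each totally geodesic hypersurface $M$ into a reflection symmetry of the lifted eigenfunction on $\Hbb^n$, and then to use rigidity of lattices in $\Isom(\Hbb^n)$ to control the resulting collection of reflections.

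\medskip

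\emph{Step 1 (vanishing on $M$ $\Rightarrow$ reflection symmetry).} For each $M$ counted by $N(X)$, fix a corresponding eigenfunction $\phi$ and a connected lift $H \subset \Hbb^n$ of $M$. Because $H$ is a totally geodesic hyperplane, the reflection $r_H$ is an isometry of $\Hbb^n$ preserving the Laplacian, so $r_H^{\ast}\widetilde{\phi}$ is an eigenfunction with the same eigenvalue as $\widetilde{\phi}$. Observe that $\widetilde{\phi}$ and $-r_H^{\ast}\widetilde{\phi}$ (in the case $\phi|_M = 0$), respectively $\widetilde{\phi}$ and $r_H^{\ast}\widetilde{\phi}$ (in the case $\nabla\phi|_M = 0$), agree in both value and first normal derivative along $H$; Holmgren's uniqueness theorem for this analytic elliptic equation forces the two to agree on a neighborhood of $H$, and real analyticity of eigenfunctions propagates the identity to all of $\Hbb^n$. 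Hence $r_H$ lies in the closed subgroup
\[
G_\phi \,:=\, \{\, g \in \Isom(\Hbb^n) : g^{\ast}\widetilde{\phi} = \pm\,\widetilde{\phi} \,\},
\]
which also contains $\Gamma = \pi_1(X)$ (acting with trivial character).

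\medskip

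\emph{Step 2 (the key lemma: $G_\phi$ is discrete).} Suppose for contradiction that the identity component $G_\phi^0$ is nontrivial. The lattice $\Gamma$, sitting inside $G_\phi$, normalizes the connected Lie subgroup $G_\phi^0$. The set-theoretic normalizer of $G_\phi^0$ in $\Isom^0(\Hbb^n)$ is algebraic (it is the stabilizer of the Lie algebra of $G_\phi^0$ under the adjoint action), and by Borel density $\Gamma$ is Zariski dense in the simple Lie group $\Isom^0(\Hbb^n)$; hence that normalizer is all of $\Isom^0(\Hbb^n)$, so $G_\phi^0$ is normal. Simplicity then forces $G_\phi^0 = \Isom^0(\Hbb^n)$, which makes $\widetilde{\phi}$ invariant under the transitive action on $\Hbb^n$ and hence constant. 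Combined with $\phi|_M = 0$, this gives $\phi \equiv 0$, a contradiction. Therefore $G_\phi$ is discrete. Combined with $\Gamma \subset G_\phi$ and the finite covolume hypothesis on $\Gamma$, this yields $[G_\phi : \Gamma] < \infty$, and in particular $r_H \in \mathrm{Comm}(\Gamma)$.

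\medskip

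\emph{Step 3 (Margulis and the main obstacle).} For non-arithmetic $\Gamma$, Margulis' commensurator theorem gives $[\mathrm{Comm}(\Gamma) : \Gamma] < \infty$, so $\mathrm{Comm}(\Gamma)$ is itself a lattice and contains only finitely many $\Gamma$-conjugacy classes of hyperplane reflections. Since distinct $M$ counted by $N(X)$ correspond to distinct such conjugacy classes, $N(X) < \infty$ in this case. \textbf{The principal obstacle is the arithmetic case}, in which $\mathrm{Comm}(\Gamma)$ is dense in $\Isom(\Hbb^n)$ and contains infinitely many $\Gamma$-conjugacy classes of reflections---hence infinitely many totally geodesic immersed hypersurfaces in $X$. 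The extra leverage needed is that $r_H$ lies in the particular discrete overgroup $G_\phi$ of $\Gamma$, not merely in $\mathrm{Comm}(\Gamma)$: either a uniform bound on $[G_\phi : \Gamma]$ over all eigenfunctions $\phi$ (forcing the $r_H$ into finitely many intermediate lattices), or an arithmetic/Hecke-theoretic rigidity statement forbidding vanishing of an automorphic eigenfunction on more than finitely many commensurator-class hyperplanes, would complete the argument.
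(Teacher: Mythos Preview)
Your Steps 1 and 2 are correct and parallel the paper closely. (Your discreteness argument via Borel density is a clean variant of the paper's Lemma, which instead uses density of the limit set of $\Gamma$ to show that a one-parameter subgroup of $G_\phi$ together with its $\Gamma$-conjugates acts transitively.)

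The genuine gap is exactly where you flag it: Step 3 in the arithmetic case. You correctly observe that $\mathrm{Comm}(\Gamma)$ is then dense and contains infinitely many $\Gamma$-classes of reflections, and you leave as an open desideratum either a uniform bound on $[G_\phi:\Gamma]$ or some Hecke-theoretic rigidity. Neither is what the paper uses, and no uniform bound on $[G_\phi:\Gamma]$ is established. Instead the paper works with the \emph{smaller} intermediate group $\Gamma(M):=\langle \Gamma, r_H\rangle \subset G_\phi$, which is discrete by Step~2 and has covolume at most $\Vol(X)$ since it contains $\Gamma$. In the arithmetic case $\Gamma(M)$ is itself arithmetic, and the key external input is the finiteness theorem: there are only finitely many arithmetic lattices in $\Isom(\Hbb^n)$ of covolume bounded by a given constant. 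This bounds the number of \emph{groups} $\Gamma(M)$ that can arise, independently of $\phi$.

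That alone is not enough, since a priori many distinct hypersurfaces $M$ could yield the same group $\Gamma^*$. The paper closes this with a short compactness argument you are missing: every totally geodesic immersed hypersurface meets a fixed compact core $K\subset X$; an infinite sequence of distinct $M_k$ with $\Gamma(M_k)=\Gamma^*$ then yields orthonormal frames over $K$ with a convergent subsequence, hence lifts $\widetilde M_k\to\widetilde M_\infty$ and reflections $r_{\widetilde M_k}\to r_{\widetilde M_\infty}$, contradicting discreteness of $\Gamma^*$. Combining these two ingredients finishes the arithmetic case.
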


In contrast, if $X$ is the standard sphere or flat torus, then 
$N(X)$ is the first uncountable ordinal. Indeed, if the isometry group of a 
compact Riemannian manifold contains a one parameter subgroup $K$
and a reflection that does not commute with $K$, then for each $k \in K$
the fixed point set of $k \circ r \circ k^{-1}$ is a totally geodesic
hypersurface that belongs to the nodal set of an eigenfunction. 

In \S \ref{section:main-thm-proof}, we prove Theorem \ref{thm:main}.
We use the Schwarz reflection principle to show that $N(X)$ is bounded above 
by the number, $N'(X)$, of totally geodesic 
immersed hypersurfaces $M$ such that there exists an orbifold covering 
$p: X \to Y$ such that $p(M)$ lies in the boundary of $Y$. 
We then use a theorem of H.-C. Wang \cite{Wang89} and a compactness argument
to show that $N'(X)$ is finite. 
In \S \ref{section:surface}, we address the question as to whether one can bound 
$N(X)$ in terms of the volume of $X$. We answer affirmatively in the 
case where $X$ is two-dimensional. 
In \S \ref{ex:figure-eight}, we construct some nodal sets on hyperbolic
surfaces that do not belong to the fixed point set of a reflection isometry. 
In \S \ref{section:questions}, we raise some questions.

We thank Kevin Pilgrim and Dylan Thurston for enlightening discussion.
We also thank the anonymous referee for very helpful criticism and especially 
for bringing \cite{Wang89} to our attention.


\section{The proof of Theorem \ref{thm:main}}  \label{section:main-thm-proof}

Recall that a hyperbolic manifold is a quotient of hyperbolic space 
$\Hbf^n$ by a torsion-free discrete subgroup of the Lie group, $\Isom(\Hbf^n)$, 
self-isometries of $\Hbf^n$. More generally, we will consider quotients by 
discrete subgroups of $\Isom(\Hbf^n)$ that include finite order 
elements including reflections. 

The group $\Isom(\Hbf^n)$ acts on functions
$f: \Hbf^n \to \Rbb$ by precomposition: $(f,g) \mapsto f \circ g$.
Let $\Gamma_f \subset \Isom(\Hbf^n)$ denote
the set of $g \in \Isom(\Hbf^n)$ so that either $f \circ g =f$ or $f \circ g = -f$.
If $f$ is continuous, then $\Gamma_f$ is closed.

\begin{lem}  \label{lem:reflection}
If $f$ is a nonconstant continuous function such that
$\Gamma_f$ contains a cofinite discrete subgroup $\Gamma \subset \Isom(\Hbf^n)$,
then $\Gamma_f$ is also discrete.
\end{lem}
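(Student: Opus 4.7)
The plan is to argue by contradiction: assume $\Gamma_f$ contains a cofinite discrete subgroup $\Lambda$ of $G := \Isom(\Hbf^n)$ but that $\Gamma_f$ is not discrete. Since $\Gamma_f$ is closed, its identity component $G_0 := \Gamma_f^0$ is then a closed connected Lie subgroup of $G$ of positive dimension, and the strategy is to show that $G_0$ must equal the identity component $G^0$ of $G$, thereby forcing $f$ to be constant. My first step would be to upgrade ``invariance up to sign'' to honest invariance on $G_0$: since $f$ is nonconstant, for each $g \in \Gamma_f$ there is a unique $\chi(g) \in \{\pm 1\}$ with $f \circ g = \chi(g) f$, and $\chi : \Gamma_f \to \{\pm 1\}$ is a group homomorphism. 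Choosing $p \in \Hbf^n$ with $f(p) \neq 0$ (available because $f$ is nonconstant), the identity $\chi(g) f(p) = f(g(p))$ combined with continuity of $f$ makes $\chi$ continuous, hence identically $1$ on the connected set $G_0$. Thus $f \circ g = f$ for every $g \in G_0$.

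The second step is to pin down $G_0$ via lattice theory. Because $\Lambda \subseteq \Gamma_f$ and the identity component of a topological group is characteristic, $\Lambda$ normalizes $G_0$; equivalently, $\Lambda$ stabilizes $\mathfrak{g}_0 := \mathrm{Lie}(G_0)$ under the adjoint representation. The normalizer $N_G(G_0)$ is therefore Zariski-closed in $G$, viewed as a real algebraic group. For $n \geq 2$ the identity component $G^0$ is a simple Lie group with no compact factors, and $\Lambda \cap G^0$ (of finite index in $\Lambda$) is a lattice in $G^0$. The Borel density theorem then gives that $\Lambda \cap G^0$ is Zariski dense in $G^0$, so $G^0 \subseteq N_G(G_0)$; that is, $G_0$ is normal in $G^0$. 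As a nontrivial closed connected normal subgroup of the simple Lie group $G^0$, the subgroup $G_0$ must coincide with $G^0$.

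Finally, $G^0$ acts transitively on $\Hbf^n$, so the $G^0$-invariance of $f$ forces $f$ to be constant, contradicting the hypothesis. The step I expect to present the main obstacle is the appeal to Borel density, which requires one to verify that the normalizer of $\mathfrak{g}_0$ under the adjoint action is genuinely Zariski closed and that $\Lambda \cap G^0$ inherits the lattice property in $G^0$; once these points are in place, the rest of the argument is routine, relying only on the simplicity of $\Isom(\Hbf^n)^0$ for $n \geq 2$ and the transitivity of its action on $\Hbf^n$.
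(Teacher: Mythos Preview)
Your argument is correct and takes a genuinely different route from the paper. The paper also argues by contradiction and picks a one-parameter subgroup $G$ inside the non-discrete closed group $\Gamma_f$, but instead of invoking Borel density and simplicity it proceeds geometrically: since the cofinite subgroup $\Gamma$ has limit set equal to the entire ideal boundary of $\Hbf^n$, the conjugates $\gamma G \gamma^{-1}$ with $\gamma \in \Gamma$ (all of which lie in $\Gamma_f$) generate a group acting transitively on $\Hbf^n$, forcing $f$ to be constant. Your approach trades this dynamical step for structure theory: you show $\Lambda$ normalizes the identity component $G_0$ of $\Gamma_f$, apply Borel density to conclude $G_0 \trianglelefteq G^0$, and then use simplicity of $\Isom(\Hbf^n)^0$ to get $G_0 = G^0$. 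What your route buys is a fully rigorous argument with each step a named theorem; the paper's transitivity claim is stated rather than justified in detail. What the paper's route buys is that it avoids the algebraic-group machinery and stays closer to the hyperbolic geometry. Your preliminary step isolating the sign character $\chi$ and showing it is trivial on $G_0$ is a nice touch that the paper omits (it implicitly uses that $f$ or $-f$ is preserved, which suffices for constancy but leaves the sign issue unaddressed).
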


\begin{proof}
Suppose to the contrary that $\Gamma_f$ is not discrete. Then since it is a closed
subgroup of a Lie group, it contains a one-parameter subgroup $G$.
Since $\Gamma$ is cofinite, its limit set in the ideal boundary of $\Hbf^n$ is dense.
It follows that the group generated by the conjugates $\gamma G \gamma^{-1} \neq G$, 
$\gamma \in \Gamma$, act transitively on $\Hbf^n$. In particular, $\Gamma_f$ acts 
transitively  and so $f$ is constant.
\end{proof}

\begin{remk}
Lemma \ref{lem:reflection} is similar 
to an observation made about Laplace eigenfunctions in the last paragraph of section 2.2 of \cite{GRS13}.
\end{remk}

If $f: \Hbf^n \to \Rbb$ is an eigenfunction of the Laplacian that
vanishes on a totally geodesic hypersurface $M \subset \Hbf^n$, 
then the reflection $r_{M} \in \Isom(\Hbf^n)$
about $M$ belongs to $\Gamma_f$. This is a well-known variant
of the Schwarz reflection principle. Here is a sketch of the proof:
Since the Laplacian commutes with the action by isometries, the 
function $f \circ r_{M}$
is an eigenfunction of $\Delta$ with the same eigenvalue. Define $h: \Hbf^n \to \Rbb$
by setting $h$ equal to $f$ on one component of $\Hbf^n - M$ and equal to
$-f \circ r_M$ on the other component.
Because both $f$ and $-f \circ r_M$ vanish on $M$,
the function $h$ belongs to the first Sobolev space and $h$ is a weak solution to
the Laplace eigenvalue problem. A well-known argument using elliptic regularity
gives that $h$ is a strong solution. But $f$ and $h$ agree on an open set
and hence $f = h$ and $f= -f \circ r_M$.  A similar argument shows that if 
the normal derivative of an eigenfunction  $f$ vanishes on $M$, then $f \circ r_M = f$.

Given a totally geodesic hypersurface $M \subset \Hbf^n/\Gamma$,
let $\widetilde{M}$ denote a lift to $\Hbf^n$. Since each lift differs by an element 
of $\Gamma$, the group $\Gamma(M)$ generated by $\Gamma$ and $r_{\widetilde{M}}$ 
does not depend on the lift of $M$.   

Suppose that a nonconstant eigenfunction $f$ (or its normal derivative) vanishes on a 
totally geodesic immersed hypersurface $M$. 
Since its lift, $\tf$, to $\Hbb^n$ is nonconstant,  Lemma \ref{lem:reflection} 
implies that $\Gamma_{\tf}$ is discrete and hence so is $\Gamma(M)$. 
Therefore, $N(X)$ is at most the number, $N'(X)$, of totally geodesic 
immersed hypersurfaces $M$ such that $\Gamma(M)$ is discrete.

In order to show that $N'(X)$ is finite, we first note that by a theorem of Wang \cite{Wang89},
the cofinite group $\Gamma$ is contained in at most finitely many discrete subgroups of $\Isom(\Hbb^n)$.
Thus, the number of possible discrete groups of the form $\Gamma(M)$ is finite. 
Thus, the following proposition completes the proof of Theorem \ref{thm:main}.

\begin{prop} \label{prop:finite-realized}

If $\Gamma$ is a cofinite discrete subgroup of $\Isom(\Hbf^n)$ and $\Gamma^* \supset \Gamma$ is discrete, 
then only finitely many totally geodesic immersed hypersurfaces $M \subset \Hbf^n/\Gamma$ 
satisfy $\Gamma(M) = \Gamma^*$.  
\end{prop}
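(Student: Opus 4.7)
The plan is to push the counting on $X$ down to a counting of reflection walls on the intermediate orbifold $Y := \Hbf^n/\Gamma^*$. Since $\Gamma \subset \Gamma^*$ are both cofinite and discrete, the index $d := [\Gamma^*:\Gamma] = {\rm Covol}(\Gamma)/{\rm Covol}(\Gamma^*)$ is finite, and the inclusion of groups induces an orbifold covering $p : X \to Y$ of degree $d$.

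Any $M \subset X$ with $\Gamma(M) = \Gamma^*$ has $r_{\tM} \in \Gamma^*$ for any lift $\tM \subset \Hbf^n$, so $\tM$ projects to a reflection wall of $Y$. Conversely, two hyperplanes in $\Hbf^n$ whose reflections lie in $\Gamma^*$ project to the same wall of $Y$ precisely when they are $\Gamma^*$-equivalent, while they yield the same immersed hypersurface $M$ in $X$ precisely when they are $\Gamma$-equivalent. Since the $\Gamma$-orbits within a single $\Gamma^*$-orbit of hyperplanes are parameterized by the double coset space $\Gamma \backslash \Gamma^* / {\rm Stab}_{\Gamma^*}(\tM)$, which has at most $d$ elements, we obtain
\be
|\{M \subset X \,:\, \Gamma(M) = \Gamma^*\}| \;\le\; d \cdot |\{\text{reflection walls of } Y\}|.
\ee
The proof therefore reduces to showing that the finite-volume orbifold $Y$ has only finitely many reflection walls.

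For this final step I would invoke the thick--thin decomposition of the finite-volume orbifold $Y$. The reflection walls form a locally finite family of closed, codimension-one, totally geodesic sub-orbifolds. Those meeting the compact thick part are finite in number by compactness and local finiteness. Any remaining wall lies in a single thin end, where the stabilizer in $\Gamma^*$ is virtually abelian---a Euclidean crystallographic group on the horosphere cross-section in the cusp case, or the virtually cyclic holonomy of a Margulis tube otherwise---and each such stabilizer has only finitely many conjugacy classes of reflections, by Bieberbach's theorem in the cusp case and by direct inspection in the tube case.

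The principal obstacle is this geometric analysis of the thin ends. A conceptually cleaner alternative, which sidesteps the thick--thin argument entirely, is to invoke the classical fact that a lattice in a real semisimple Lie group has only finitely many conjugacy classes of finite subgroups (a consequence of Selberg's lemma together with standard finiteness results of Borel). Since reflection walls of $Y$ are in natural bijection with $\Gamma^*$-conjugacy classes of reflections in $\Gamma^*$, that finiteness theorem immediately furnishes the required bound and completes the proof.
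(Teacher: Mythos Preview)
Your argument is correct, but it follows a genuinely different route from the paper's. The paper argues directly by contradiction on $X$: assuming infinitely many $M_k$, it first observes that every totally geodesic immersed hypersurface must meet a fixed compact core $K$ (the complement of disjoint horoball cusp neighborhoods), then picks adapted orthonormal frames over $K\cap M_k$, extracts a convergent subsequence by compactness of the frame bundle over $K$, and exponentiates the limit to get a limit hypersurface $M_\infty$. Lifting, the reflections $r_{\tM_k}$ are distinct elements of $\Gamma^*$ converging to $r_{\tM_\infty}$, contradicting discreteness.

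Your approach instead pushes down to $Y=\Hbf^n/\Gamma^*$: the double-coset count bounding the number of $M$'s over a single wall by $d=[\Gamma^*:\Gamma]$ is correct and pleasant, and then finiteness of reflection walls in $Y$ via the standard ``finitely many conjugacy classes of finite subgroups in a lattice'' (your option (b)) finishes cleanly. Your option (a) is really the paper's compactness idea transplanted to $Y$, but your phrasing is slightly off: in fact \emph{every} wall meets the compact thick part (precisely by the horoball argument the paper uses), so the ``wall entirely in a thin end'' case is vacuous and the stabilizer analysis there is unnecessary. In summary, the paper's route is self-contained and uses the discreteness of $\Gamma^*$ in one stroke; yours is more structural, gives the explicit factor of $d$, and trades the ad hoc compactness step for a citable theorem whose own proof, however, rests on a similar compactness argument.
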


\begin{proof}
First we note that there is a compact subset $K \subset \Hbf^n/\Gamma$ such that 
each totally geodesic immersed submanifold $M \subset  \Hbf^n/\Gamma$ must intersect $K$.  
Indeed, let $K$ be the complement of a union of disjoint horoball neighborhoods of the cusps.
Any lift to $\Hbf^n$ of a geodesic on $M$ joins two distinct points on the ideal boundary.
If an endpoint does lie at the end of a cusp, then it enters $K$ and if 
both endpoints lie in the closure of horoballs, they lie in distinct horoballs,
and hence must enter $K$. 

Suppose that $M_k$ is an infinite sequence of distinct 
totally geodesic immersed hypersurfaces
with $\Gamma(M_k)= \Gamma^*$ for each $k$. Since each $M_k$ intersects $K$, 
there exists an orthonormal frame $F_n$ above some point in $K$ such that 
the first $n-1$ vectors in the frame belong to $TM$.  Since $K$ is compact, so is the
frame bundle over $K$, and hence we may extract a subsequence converging to a frame 
$F_{\infty}$. 
Exponentiation of the space spanned by the first $n-1$ vectors in $F_{\infty}$ determines 
a totally geodesic immersed hypersurface $M_{\infty}$. Lift $M_{\infty}$ and  the $M_n$ 
to totally geodesic hypersurfaces $\widetilde{M}_{\infty}$ and $\widetilde{M}_k$ in $\Hbf^n$
so that  $\widetilde{M}_{k}$ converges to $\widetilde{M}_{\infty}$ uniformly on compact 
subsets. It follows that the sequence of distinct elements $r_{\tM_k}$ converges
to $r_{\tM_{\infty}}$. This contradicts the discreteness of $\Gamma^*$.  
\end{proof}


\section{Towards bounding  $N(X)$  in terms of of the volume of $X$} \label{section:surface}

It is natural to ask whether $N(X)$ can be bounded in terms of the volume of $X$. 
As seen in the previous section, the finiteness of $N'(X)$---and hence $N(X)$---is obtained 
combining the main theorem of \cite{Wang89} and Proposition \ref{prop:finite-realized}. 
As far as the authors are aware, the
main theorem of \cite{Wang89} has not yet been made quantitative, 
and, moreover, we do not know how to quantify Proposition \ref{prop:finite-realized} 
for general $n$. Nonetheless, in this section, we will show that the number of distinct $\Gamma(M)$ 
can be bounded in terms of $\Vol(M)$ and also that Proposition \ref{prop:finite-realized} can
be quantified if $n=2$. In particular, we will obtain

\begin{thm} \label{thm:surface}
For each $A>0$, there exists $C_A$ such that if $X$ is a 
 hyperbolic surface with $\area(X)<A$, then $N(X) \leq N'(X) < C_A$.
\end{thm}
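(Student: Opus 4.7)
The plan is to factor
\[
N'(X)\ \leq\ N^*(X)\cdot\max_{\Gamma^*}\#\{M:\Gamma(M)=\Gamma^*\}
\]
and bound each factor by a function of the area. Theorem~\ref{thm:discrete-finite} already handles $N^*(X)$, so the task reduces to bounding, for each fixed overgroup $\Gamma^*\supset\Gamma$, the number of totally geodesic immersed curves $M\subset X$ with $\Gamma(M)=\Gamma^*$.

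For such a $\Gamma^*$, set $d:=[\Gamma^*:\Gamma]$ and $Y:=\Hbf^2/\Gamma^*$. The assignment $M\mapsto r_{\tM}$ identifies $\{M:\Gamma(M)=\Gamma^*\}$ with a subset of the $\Gamma$-conjugacy classes of reflections in $\Gamma^*$, and each $\Gamma^*$-conjugacy class of reflections splits into at most $d$ many $\Gamma$-conjugacy classes. Hence
\[
\#\{M:\Gamma(M)=\Gamma^*\}\ \leq\ d\cdot e(Y),
\]
where $e(Y)$ is the number of $\Gamma^*$-conjugacy classes of reflections in $\Gamma^*$, equivalently the number of maximal geodesic arcs constituting the mirror boundary $\partial Y$ of the orbifold $Y$.

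I would then bound $d$ and $e(Y)$ via two-dimensional orbifold Gauss--Bonnet. Every hyperbolic 2-orbifold has area at least $\pi/42$ (attained by the $(2,3,7)$-reflection triangle group), and $\area(Y)=\area(X)/d$, so $d\leq 42A/\pi$. For $e(Y)$: each connected component of $\partial Y$ is either a smooth closed geodesic or a polygon of arcs meeting at corner reflectors, so $e(Y)$ is bounded by the number of components of $\partial Y$ plus the total number of corner reflectors. The identity $-\chi_{\mathrm{orb}}(Y)=\area(Y)/(2\pi)$, together with the non-negativity of the cone-point and corner corrections in $\chi_{\mathrm{orb}}(Y)=\chi(|Y|)-\sum_i(1-1/n_i)-\tfrac{1}{2}\sum_j(1-1/m_j)$, controls both quantities by $O(\area(Y))=O(A)$, so $e(Y)=O(A)$ and $d\cdot e(Y)=O(A^2)$, yielding the required constant $C_A$.

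The only real subtlety is the correspondence in the second paragraph between reflections in $\Gamma^*$ and immersed geodesics in $X$; once this is set up, the ensuing Gauss--Bonnet arithmetic --- the minimum-area bound for hyperbolic 2-orbifolds and the linear control on boundary components and corner reflectors --- is standard two-dimensional orbifold geometry.
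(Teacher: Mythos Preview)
Your approach is correct and reaches the same conclusion, but by a genuinely different route from the paper.  The paper bounds $\#\{M:\Gamma(M)=\Gamma^*\}$ by staying inside $X$: it forms the union $G\subset X$ of all such geodesics, proves a combinatorial lemma (Lemma~\ref{lem:eulerbound}) giving $k\le p(X)-\chi(G)$, invokes $\chi(X)=\chi(G)+\chi(X\setminus G)$, and then bounds $\chi(X\setminus G)$ by observing that each complementary disc lifts to a reflection polygon and so has area $\ge\pi/42$ by Siegel.  This yields the explicit linear bound $\tfrac{173}{4\pi}\,\area(X)$.  You instead descend to the quotient orbifold $Y=\Hbf^2/\Gamma^*$, identify geodesics $M$ with $\Gamma$-conjugacy classes of reflections, and count these as $d\cdot e(Y)$ via orbifold Gauss--Bonnet on $Y$ together with Siegel's minimum-area bound for $d$.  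Your argument is conceptually cleaner and sidesteps the graph-theoretic lemma entirely; the paper's argument, in exchange, produces an explicit linear constant.  Incidentally, your stated $O(A^2)$ can be sharpened to $O(A)$: since $e(Y)\le C_1\,\area(Y)+C_2$ and $d\cdot\area(Y)=\area(X)$, one gets $d\cdot e(Y)\le C_1\,\area(X)+C_2\,d\le (C_1+42C_2/\pi)\,\area(X)$.
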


We begin with a bound of the number, $N^*(X)$, of distinct discrete groups $\Gamma(M)$
that arise from totally geodesic immersed hypersufaces $M \subset X$.

\begin{thm} \label{thm:discrete-finite}
For each $V>0$
there exists a constant $C_V$ such that if $X$ is an $n$-dimensional hyperbolic manifold
with volume less than $V$, then $N^*(X) < C_V$.
\end{thm}

\begin{proof}
Let $X= \Hbb/\Gamma$, let $r$ be the reflection in a lift of a totally
geodesic immersed hypersurface $M$, and suppose that $\Gamma(M)=\langle \Gamma, r\rangle$
is discrete. Both $\Gamma$ and $r \Gamma r^{-1}$ are finite index in 
$\Gamma(M) =\langle \Gamma, r\rangle$, and hence $\Gamma \cap r\Gamma r^{-1}$
is finite index in both $\Gamma$ and $r \Gamma r^{-1}$. In other words, 
$r$ lies in the commensurator, ${\rm Comm}(\Gamma)$, of $\Gamma$
\cite{Maclachlan-Reid}. In particular, $\Gamma(M)$ is a subgroup of ${\rm Comm}(\Gamma)$
with index at most $d: = [{\rm Comm}(\Gamma): \Gamma]$.

Suppose that $d$ is finite. Then ${\rm Comm}(\Gamma)$ is cofinite
and $d$ equals $\Vol(X)$ divided by the covolume of ${\rm Comm}(\Gamma)$. 
By a result of Gelander  \cite{Gelander11},
there is a constant $K_n>0$ such that for each discrete subgroup 
$\Gamma' \subset \Isom(\Hbf^n)$ the number of generators 
of $\Gamma'$ is at most $K_n$  times the covolume of $\Gamma'$. 
Hence, since the covolume of ${\rm Comm}(\Gamma)$ at most $\Vol(X)$,
the number, $k$, of generators of  ${\rm Comm}(\Gamma)$ is at most $K_n \cdot \Vol(X)$.
Also, since $k \geq 1$, the covolume of ${\rm Comm}(\Gamma)$ is at least $1/K_n$. 
It follows that $d$ is at most $K_n \cdot \Vol(X)$.
The number of subgroups of index at most $d$  in a group generated by $k$ 
elements is at most the number of subgroups 
of index at most $d$ in the free group on $k$ generators. 
Therefore we have a bound on $N^*(X)$ in terms of $\Vol(X)$ when $d$ is finite.

If $d$ is not finite then, by a well-known theorem of Margulis \cite{Maclachlan-Reid},  
the lattice $\Gamma$ is arithmetic.  By another well-known theorem of Margulis  \cite{Maclachlan-Reid},
there are only finitely many hyperbolic manifolds,  $X_1, X_2, \ldots, X_k$, with volume less than $V$
which are quotients of arithmetic lattices. Let $\Gamma_1, \Gamma_2,\ldots, \Gamma_k$ 
be lattices such that $X_i = \Hbf^n/ \Gamma_i$ for $i=1, \ldots, k$. 
By \cite{Wang89}, the number, $n_i$, of discrete groups in which $\Gamma_i$ is contained 
is finite. Hence $N^*(X) \leq \max\{n_1, \ldots, n_k\}$ for $X$ arithmetic with volume
at most $V$. 
\end{proof}

\begin{remk}
If $ n \geq 4$, then the proof above can be simplified by use of a well-known result of  Wang:
If $n \geq 4$, then there are only finitely many hyperbolic manifolds
with volume less than $V$  (see Theorem 8.1 in  \cite{Wang72}). 
\end{remk}

Next, we bound the number of distinct $M$ such that $\Gamma(M)$ equals a given discrete group $\Gamma^*$
when $n=2$. By combining this with Theorem  \ref{thm:discrete-finite}, we will have Theorem \ref{thm:surface}.

\begin{prop}
Let $X= \Hbf^2/\Gamma$ be a complete finite area hyperbolic surface,
and let $\Gamma^*$ be a cofinite subgroup of $\Isom(\Hbf^2)$ that contains $\Gamma$.
The number of closed geodesics $M \subset X$ such that $\Gamma(M) = \Gamma^*$ is
at most $\frac{173}{4 \pi} \cdot Area(X)$. 
\end{prop}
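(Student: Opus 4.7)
The plan is to push the count down to the orbifold quotient $Y := \Hbf^2/\Gamma^*$, apply Gauss--Bonnet for orbifolds there, and pull back via the cover $p : X \to Y$ of degree $d := [\Gamma^*:\Gamma]$, for which $A(Y) = A(X)/d$. Write $\Mcal$ for the set of closed geodesics $M \subset X$ with $\Gamma(M) = \Gamma^*$. First I observe that if $M \in \Mcal$ then $r_{\tM} \in \Gamma^*$, so $p(M)$ lies in the mirror locus $G_Y$ of $Y$. Now $G_Y$ is a finite geodesic graph: its vertices are the corner reflectors---each a point of dihedral isotropy $D_n$ at which exactly two mirror edges of $Y$ meet at angle $\pi/n$, hence of valence two---together with any dihedral cusps; its edges are the mirror arcs and the closed mirror loops. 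The preimage graph $G_X := p^{-1}(G_Y) \subset X$ therefore contains every $M \in \Mcal$ as a closed geodesic.

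Next I would exploit the local structure at vertices of $G_X$: at a vertex $v$ lying over a corner of order $n$, there are in $\Hbf^2$ exactly $n$ reflection axes of $\Gamma^*$ through the corresponding lift, so $v$ has valence $2n$ and a smooth geodesic through $v$ is forced to travel along one of these $n$ axes, using a specific ``straight-through'' pairing of its $2n$ incident edges. Inserting $n$ bivalent virtual vertices at $v$, one per pairing, produces a $2$-regular resolution $G_X'$ whose connected components correspond bijectively to smooth closed geodesics supported on $G_X$; consequently $|\Mcal|$ is at most the number of components of $G_X'$.

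To bound this count I apply Gauss--Bonnet for orbifolds,
\[
\frac{A(Y)}{2\pi} \;=\; -\chi^{\mathrm{orb}}(Y) \;=\; -\chi(|Y|) + \sum_i\Bigl(1 - \tfrac{1}{p_i}\Bigr) + \tfrac{1}{2}\sum_j\Bigl(1 - \tfrac{1}{q_j}\Bigr),
\]
obtaining $V_c(Y) \leq 2A(Y)/\pi$ (since each corner of order $q \geq 2$ contributes at least $1/4$) and $b(Y) \leq A(Y)/(2\pi) + 2$ for the number of mirror boundary components (each contributing $+1$ to $-\chi(|Y|)$). Using that $\Gamma$ is torsion-free, a double-coset argument shows each corner of order $n$ in $Y$ has exactly $d/(2n)$ preimages in $X$, each supplying $n$ virtual vertices. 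Hence $G_X'$ has at most $(d/2)\,V_c(Y) \leq A(X)/\pi$ virtual vertices, and so at most that many components meeting a virtual vertex; components containing no virtual vertex are preimages of corner-free mirror loops in $Y$, each of which lifts to at most $d/2$ cycles, contributing at most $(d/2)\,b(Y) \leq A(X)/(4\pi) + d$ more.

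The main obstacle is the additive $+d$ term, which is what forces the relatively large constant $173$. Here I invoke Siegel's theorem: the minimum area of a hyperbolic $2$-orbifold is $\pi/42$, realized by the $(2,3,7)$-triangle reflection orbifold, so $d = A(X)/A(Y) \leq 42\,A(X)/\pi = 168\,A(X)/(4\pi)$. Summing the three contributions yields $(4 + 1 + 168)\,A(X)/(4\pi) = 173\,A(X)/(4\pi)$, as required.
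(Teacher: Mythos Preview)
Your argument is correct and lands on the same constant $173/(4\pi)$, but the route is genuinely different from the paper's. The paper never passes to the quotient orbifold: it lets $G\subset X$ be the union of the $k$ geodesics, uses $\chi(X)=\chi(G)+\chi(X\setminus G)$, and proves a separate combinatorial lemma giving $\chi(G)\le -k+p(X)$ where $p(X)=3g-3+n$ is the pants number. This yields $k\le p(X)+\chi(X\setminus G)-\chi(X)$. The term $\chi(X\setminus G)$ is bounded by the number $j$ of disc components of $X\setminus G$, and each such disc lifts to a compact reflection polygon in $\Hbf^2$, so Siegel's $\pi/42$ bound gives $j\le 42\,\area(X)/\pi$. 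The three pieces $p(X)$, $j$, $-\chi(X)$ contribute $3+168+2$ over $4\pi$. You instead descend to $Y=\Hbf^2/\Gamma^*$, realise each $M$ inside the preimage of the mirror locus, resolve crossings to a $2$-regular graph, and control the component count via orbifold Gauss--Bonnet; your three pieces (virtual vertices, corner-free loops, the additive $d$) contribute $4+1+168$ over $4\pi$, and your use of Siegel is to bound the index $d$ rather than areas of discs. The paper's approach is more self-contained (its graph lemma is elementary and proved in-line), while yours is more structural, letting the orbifold carry the combinatorics and avoiding the auxiliary lemma entirely; the cusp and dihedral-cusp bookkeeping in your version deserves a sentence of care, but does not affect the bound since closed geodesics avoid cusps.
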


\begin{proof}
Let $G$ be the union of the $k$ closed geodesics 
$M \subset X$ such that $\Gamma(M) \subset \Gamma^*$.  We have 
$\chi(X)~ =~  \chi(G)~ +~ \chi(X \setminus G)$ and hence by Lemma \ref{lem:eulerbound}
below, we have 
\[ k~ \leq~ p \, -\, \chi(G)~ \leq~ p\, +\, \chi(X \setminus G)\, -\, \chi(X) \] 
where $p$ is the number of curves needed to divide $X$ into pairs of pants. 
Since $X$ is a finite area hyperbolic surface, 
we have $p\leq-(3/2) \cdot \chi(X)$ and $-\chi(X)= \area(X)/ 2\pi$.

The Euler characteristic of $X \setminus G$ is at most the number, $j$, 
of connected components of
$X \setminus G$ with positive Euler characteristic. A component $C$ with $\chi(C)>0$
is homeomorphic to a disc, and hence it lifts to a compact polygon $P$
in $\Hbf^2$ bounded by geodesics $\tM$ such that $r_{\tM} \in \Gamma^*$. In particular, the 
group generated by reflections in the sides of $P$ is a discrete subgroup of $\Isom(\Hbf^2)$
and hence $\area(C) = \area(P) \geq \pi/42$ \cite{Siegel}.  The sum of the areas
of the $j$ components is at most $\area(X)$, and so we have $j \leq \area(X)/(\pi/42)$. 
\end{proof}

For the sake of completeness, we include the following lemma which is surely well-known. 
For a surface $X$, let $p(X)$ denote the maximal cardinality of a
collection of disjoint, nonparallel, essential, nonperipheral simple
closed curves on $X$. In other words, $p(X)$ is the number of curves 
needed to divide $X$ into pairs of pants. If $X$ is an $n$ times punctured
genus $g$ surface, then $p(X)= 3g-3+n$.

\begin{lem} \label{lem:eulerbound}
Let $X$ be a topological surface with $\chi(X)<0$. 
If $G \subset X$ is the union of $k$ 
mutually non-homotopic, essential, nonperipheral, closed curves that meet 
transversally, then 
\begin{equation} \label{eqn:euler}
    \chi(G)~ \leq~ -k~ +~  p(X).  
\end{equation}
Moreover, if equality holds then a subcollection of $G$ divides $X$ 
into pairs of pants. 
\end{lem}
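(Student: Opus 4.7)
The plan is to interpret $\chi(G)$ combinatorially and reduce the inequality to a counting estimate on the number of crossings. Since the $k$ curves meet transversely, $G$ is a graph in which every intersection is a $4$-valent vertex, every curve with $d \geq 1$ crossings is subdivided into $d$ arcs, and every curve disjoint from the rest contributes a circle of Euler characteristic zero. Letting $V$ denote the total number of crossings, a direct count gives
\[
\chi(G)~=~V \,-\, 2V~=~-V,
\]
so the desired inequality is equivalent to $V \geq k - p(X)$.

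I would prove this inequality by strong induction on $k$. The case $k \leq p(X)$ is immediate because $V \geq 0$. For the inductive step I consider two cases. \textbf{Case A:} some curve $\gamma \in G$ is disjoint from every other curve. Cut $X$ along $\gamma$ to produce a (possibly disconnected) surface $X'$; a short topological check shows that every component of $X'$ still has negative Euler characteristic, that $p(X') = p(X) - 1$ regardless of whether $\gamma$ separates, and that each remaining curve is still essential, nonperipheral, and mutually non-homotopic in its component of $X'$. Applying the inductive hypothesis componentwise and summing yields $V \geq (k-1) - (p(X)-1) = k - p(X)$. \textbf{Case B:} every curve meets at least one other. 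Pick any $\gamma \in G$ with $m \geq 1$ crossings and remove it; the remaining $k-1$ curves in $X$ still satisfy the hypotheses and have $V - m$ crossings, so induction gives $V - m \geq (k-1) - p(X)$, hence $V \geq k - p(X)$ since $m \geq 1$.

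For the equality clause I would propagate equality back through the induction. In Case B, equality forces $m = 1$ and equality on $G \setminus \gamma$ in $X$; by the inductive hypothesis, $G \setminus \gamma$ contains a pants-decomposition subcollection, and this is already a subcollection of $G$. In Case A, equality forces equality on each component of $X'$; the inductive pants subcollections together with $\gamma$ assemble a pants decomposition of $X$ drawn from $G$. The trivial base cases $k = 0$ and $V = 0, k = p(X)$ deliver pants decompositions directly.

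The main obstacle, in my view, is the topological bookkeeping required in Case~A: one must verify that cutting along $\gamma$ never turns a remaining curve into a nullhomotopic, peripheral, or parallel-to-another curve in $X'$, and never yields a component of $X'$ with non-negative Euler characteristic. Both reductions hinge on the observation that every new boundary component of $X'$ is a copy of $\gamma$, combined with the non-homotopy of $\gamma$ to the other curves in $G$ and to the original boundary components of $X$.
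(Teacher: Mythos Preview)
Your argument is essentially correct and takes a different route from the paper. You rewrite $\chi(G)=-V$ and induct on $k$, splitting on whether some curve is disjoint from the others (cut the surface along it) or every curve meets another (delete one). The paper instead inducts on the number of \emph{connected components} of $G$: for connected $G$ it iterates the subadditivity $\chi(G\cup\gamma)\le\chi(G)-|G\cap\gamma|$ to obtain $\chi(G)\le -k+1$, and for disconnected $G=G_+\sqcup G_-$ with $G_+$ connected it either cuts along $G_+$ (when $G_+$ is a single simple curve) or places $G_+$ and $G_-$ in disjoint subsurfaces of negative Euler characteristic and applies induction to each. Your identity $\chi(G)=-V$ makes the combinatorics more transparent; the paper's version trades this for somewhat less surface-cutting bookkeeping.

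There is one real oversight. The curves in $G$ are not assumed embedded---the paper uses this explicitly (on a pair of pants every essential nonperipheral closed curve has a self-crossing). Your opening paragraph already slips here: a curve ``disjoint from the rest'' contributes a circle of Euler characteristic zero only if it is also simple. More importantly, in Case~A the curve $\gamma$ you select may have self-intersections, and then ``cut $X$ along $\gamma$'' is unavailable. The fix is immediate: take the dichotomy to be (A$'$) some $\gamma$ is simple and disjoint from the others, versus (B$'$) every curve carries at least one crossing, and in (B$'$) delete any such curve exactly as in your Case~B. The equality analysis survives unchanged (the deleted curve is forced to carry a single crossing), and with this adjustment your proof is complete.
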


\begin{proof}
If a graph $G$ and a curve $\gamma$ intersect transversally, then 
\begin{equation} \label{eqn:jump-down}
 \chi(G \cup \gamma)~ \leq~ \chi(G)~ -~ |G \cap \gamma|. 
\end{equation}
Indeed, using an orientation of $\gamma$, 
one can map the new edges that lie in $\gamma$ onto 
the set of intersection points $G \cap \gamma$. In particular, 
to each intersection point 
that was not a vertex in $G$, there is at least one new edge. 
If an intersection point $x$ is not a vertex of $G$, 
then $x$ bisects an edge of $G$. Hence for each new vertex, there are 
at least two new edges. Therefore, the Euler characteristic decreases by at 
least $|G \cap \gamma|$. 

We prove the claim by induction on the number of connected components of $G$. 
If $G$ is connected, then an inductive argument using (\ref{eqn:jump-down})
gives $\chi(G) \leq -k + 1$. Thus, if $p(X)\geq 1$, we have (\ref{eqn:euler})
and if equality holds, then $p(X)=1$ and hence $X$ is homeomorphic 
to either a once punctured torus $X_{1,0}$ or a four times punctured sphere $X_{0,4}$. 
A closed curve with a self-intersection point has negative Euler characteristic,
and two nonhomotopic simple closed curves on either $X_{1,0}$ or $X_{0,4}$ 
must intersect at least once.
Thus, by using induction on $k$, one finds that the equality implies that each closed 
curve in $G$ is simple. Any essential nonperipheral 
simple closed curve on $X_{1,0}$ or $X_{0,4}$ divides the surface into pairs of pants.

If $p(X)=0$, then $X$ is homeomorphic to 
a pair of pants. In this case, each essential nonperipheral closed curve $\alpha$
has at least one self-intersection and thus $\chi(\alpha) \leq -1$. 
An induction argument using (\ref{eqn:jump-down}) then shows $\chi(G) \leq -k$.  

Suppose that the claim is proven when the graph has $j$ components,
and suppose that $G$ is a graph with $j+1$ components. In other words, 
$G= G_+ \sqcup G_-$ where $G_+$ is connected and $G_-$ has $j$ components. If $G_+$ 
consists of a simple closed curve, then each connected component of $G_-$ lies in a connected component $X_-$
of $X \setminus G_+$. By the inductive hypothesis, we have 
$\chi(G)= \chi(G_-) \leq -(k-1) +p(X_-) = -k + p(X)$. 
If equality holds, then by the inductive hypothesis a subset $G'$ of $G_-$
divides $X_-$ into pairs of pants, and therefore $G_+\sqcup G'$ divides
$X$ into pairs of pants. 

If $G_+$ is not a simple closed curve, then there exist disjoint 
subsurfaces $X_+$ and $X_-$ so that $G_{\pm} \subset X_{\pm}$ and $\chi(X_{\pm})<0$.
Thus, by the inductive hypothesis, 
$\chi(G)= \chi(G_+)+ \chi(G_-) \leq -k + p(X_+)+ p(X_-) \leq -k + p(X)$.  
Note that  $p(X_+)+ p(X_-) <  p(X)$ and so equality cannot hold in this case.
\end{proof}


\section{Some constructions}  \label{ex:figure-eight}

Not every totally geodesic component of a nodal set is the fixed
point set of a reflection isometry. To see this, we construct here a closed 
hyperbolic surface $X$ of genus two and an eigenfunction on $X$ 
such that vanishes on a closed geodesic with a point of self-intersection. 
Because the self-intersection is transversal, it can not be the fixed
point set of a reflection.  
 
Let $O$ be a regular geodesic octagon in the hyperbolic plane such that 
the angle at each vertex is $\pi/2$ \cite{Buser}. See the figure below.

\begin{center}
\includegraphics[scale=.4]{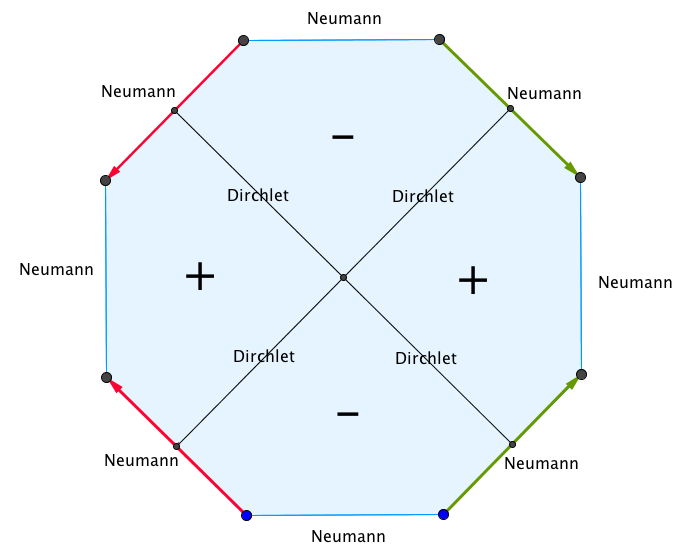}
\end{center}

Choose two geodesics that intersect orthogonally in the center of the octagon 
and that bisect opposing sides. These geodesics divide the octogon into four right-angled
pentagons. For one of these pentagons solve the eigenvalue problem 
with Dirichlet conditions on the interior geodesics and Neumann conditions
on the exterior geodesics. Then extend to the other pentagons via 
Schwarz reflection. Because of the Neumann conditions on the exterior sides, 
we may identifiy the oriented red sides together and the oriented green sides to obtain 
an eigenfunction on a pair of pants such that the eigenfunction vanishes 
on a closed geodesic
with one intersection point. By doubling the surface, we obtain an 
eigenfunction on a genus two surface with the desired properties. 

One can also construct hyperbolic surfaces with simple closed geodesics
that lie in the nodal set of an eigenfunction but that do not belong to
the fixed point set of a reflective isometry. For example, let $P$ be a hyperbolic 
pair of pants with equal boundary lengths \cite{Buser}. Solve the eigenvalue 
problem for the Laplacian on $P$ with Dirichlet conditions on one boundary component
and Neumann boundary conditions on the other two boundary components. 
Glue four copies of $P$ together as indicated in the figure below.

\includegraphics[scale=.3]{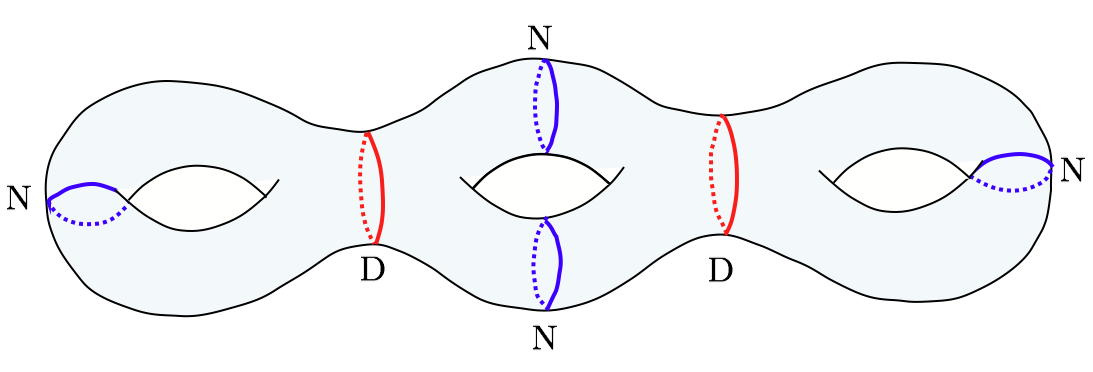}

\noindent 
In particular, we use Schwarz reflection to extend the eigenfunction on $P$
to the entire genus three surface. The eigenfunction vanishes on the simple 
geodesics colored red in the figure above. 

Although in the examples above the closed geodesic is not the fixed point set 
of a reflection isometry, each surface does have a finite covering 
such that the lift of the geodesic is the fixed point set of a reflection.
More generally, we have the following.

\begin{prop} \label{prop:cover}
Let $X$ be a hyperbolic manifold with finite volume. 
If $f$ is an eigenfunction that vanishes on an immersed totally
geodesic hypersurface $M \subset X$, then there exists a finite cover $p:Y \to X$
and a lift $N$ of $M$ such that $N$ is the fixed point set of a reflection
isometry and the lift of $f$ to $Y$ vanishes on $N$. 
\end{prop}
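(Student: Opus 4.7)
My approach is to construct the cover $Y$ directly from a suitable finite-index subgroup of $\Gamma$ that is normalized by the reflection in a lift of $M$. Writing $X = \Hbf^n/\Gamma$ with $\Gamma$ torsion-free of finite covolume, I would lift $f$ to a $\Gamma$-invariant eigenfunction $\tf$ on $\Hbf^n$, choose a connected lift $\tM$ of $M$, and set $r := r_{\tM}$. Since $\tf$ vanishes on $\tM$, the Schwarz reflection argument from \S\ref{section:main-thm-proof} gives $\tf \circ r = -\tf$; hence $r$ lies in the stabilizer $\Gamma_{\tf}$, which is discrete by Lemma \ref{lem:reflection}, so $\Gamma(M) = \langle \Gamma, r \rangle$ is discrete. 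Exactly as in the proof of Theorem \ref{thm:discrete-finite}, both $\Gamma$ and $r\Gamma r^{-1}$ have finite index in $\Gamma(M)$, so
\[
\Gamma' \;:=\; \Gamma \cap r\Gamma r^{-1}
\]
has finite index in $\Gamma$.

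The next step is to observe that $\Gamma'$ is normalized by $r$: a direct computation gives $r\Gamma' r^{-1} = r\Gamma r^{-1} \cap \Gamma = \Gamma'$. Since $\Gamma$ is torsion-free so is $\Gamma'$, and hence $Y := \Hbf^n/\Gamma'$ is a hyperbolic manifold admitting a finite Riemannian covering $p : Y \to X$. Because $r$ normalizes $\Gamma'$, it descends to an isometric involution $\bar r$ of $Y$. Taking $N$ to be the image of $\tM$ under the quotient $\Hbf^n \to Y$, one sees that $N$ is a connected totally geodesic immersed hypersurface of $Y$, and $p(N) = M$, so $N$ is a lift of $M$ in the natural sense. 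The $\Gamma$-invariant function $\tf$ is automatically $\Gamma'$-invariant and so descends to $Y$, yielding an eigenfunction that agrees with $p^* f$ and vanishes on $N$.

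The only point that warrants care is confirming that $N$ is genuinely a connected component of $\mathrm{Fix}(\bar r)$, so that $\bar r$ is a reflection isometry across $N$ rather than some more exotic involution that merely happens to fix $N$. I would handle this with a local check: in a small $r$-invariant tubular neighborhood $U$ of a point of $\tM$ in $\Hbf^n$, one has $\mathrm{Fix}(r|_U) = \tM \cap U$; since $\Gamma'$ acts freely and properly discontinuously, $U$ can be shrunk to embed in $Y$ under the projection. This shows that in a neighborhood of $N$ the fixed set of $\bar r$ coincides with $N$, so $N$ is open in $\mathrm{Fix}(\bar r)$, and by closedness it is a component. Hence $\bar r$ acts locally as a reflection across $N$, completing the proof.
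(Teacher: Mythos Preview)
Your proof is correct and follows essentially the same route as the paper: form $\Gamma' = \Gamma \cap r\Gamma r^{-1}$, observe that $r$ normalizes $\Gamma'$, and take $Y = \Hbf^n/\Gamma'$ with $N = \tM/\Gamma'$. The paper's argument is terser and omits your final local check that $N$ is a component of $\mathrm{Fix}(\bar r)$, simply asserting that the descended reflection has fixed point set $\tM/\Gamma'$.
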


\begin{proof}
Let $\Gamma$ be a lattice such that $X = \Hbf^n/\Gamma$, and let $\tf$
be the lift of $f$ that is invariant under $\Gamma$. We have   
$\Gamma \subset \langle \Gamma, r \rangle \subset \Gamma_{\tf}$
where $r$ is the reflection through a lift $\tM$ of $M$ to $\Hbf^n$.  
Since $\Gamma$ is cofinite, the index $[ \langle \Gamma, r \rangle \, : \, \Gamma]$ is finite.
Hence the intersection $\Gamma' = \Gamma \cap (r \Gamma r^{-1})$ has 
finite index in $\langle \Gamma, r \rangle$. We have $r \Gamma' r^{-1} = \Gamma'$
and hence $r$ descends to a reflection isometry on $\Hbf^n/ \Gamma'$ whose
fixed point set is $\tM/\Gamma'$. Since $\tf$ vanishes on $\tM$, the lift of
$f$ to $\Hbf^n/\Gamma'$ vanishes on $\tM/\Gamma'$.  
\end{proof}

\section{Some questions} \label{section:questions}

In the introduction we observed that if the isometry group of a compact Riemannian 
manifold contains a one parameter subgroup $K$, then $N(X)$ may be infinite. 

\begin{quest} \label{quest:finite}
If the isometry group of a compact Riemannian manifold 
is discrete, then is $N(X)$ finite?  
\end{quest}

Recently, in \cite{EP15} it was proven that given $n \in \Nbb$ and 
an isotopy class $C$ of a hypersurface in a compact topological manifold $M$
one can construct a Riemannian metric $g$ on $M$ and an eigenfunction $\phi$
of the Laplacian $\Delta_g$ such that $\phi$ vanishes on $n$ disjoint 
hypersurfaces each belonging to the homotopy class $C$. The methods of  \cite{EP15}
can be applied to solve the following problem:  Given a compact manifold $M$,
$\epsilon>0$ and $n \in \Nbb$, there exists a Riemannian metric $g$ on $M$,
$n$ disjoint hypersurfaces $H_i$, and an eigenfuntion $\phi$ such that 
each $H_i$ is at distance at most $\epsilon$ from a totally geodesic 
hypersurface and $\phi|_{H_i}=0$. These examples do not have negative curvature.

In \S \ref{section:surface}, we showed that if the dimension, $n$, of $X$ equals 2,
then one can bound $N'(X)$ and hence $N(X)$ 
in terms of the area of $X$. For $n \ge 4$, Theorem \ref{thm:main} 
together with Wang finiteness \cite{Wang72} implies that there exists a bound on 
$N(X)$ in terms of the volume of $X$.\footnote{Thanks to the anaonymous 
referee for reminding us to state this fact.}

\begin{quest} \label{quest:bound}
If $n = 3$, can one bound $N(X)$ in terms of the volume of $X$?
\end{quest}
In \S \ref{section:surface}, we reduced Question \ref{quest:bound} to a
quantification of Proposition \ref{prop:finite-realized} for $n \geq 3$. 
A related question: To what extent does $\Gamma(M)$ determine the totally geodesic 
hypersurface $M$?

For each $g$ and $p$, the supremum, $N_{g,p}$, of $N(X)$  over the finite
area hyperbolic structures on a genus $g$ surface with $p$ punctures
is finite.  One can easily construct examples that show that 
$N_{g,p}$ is unbounded.

\begin{quest} \label{conj:bounded}
How does $N_{g,p}$ grow as $g$ and/or $p$ tend to infinity?
\end{quest}

Finally, we wonder how general Proposition \ref{prop:cover} is.  

\begin{quest}
If a totally geodesic hypersurface $H$ belongs to the nodal set of an eigenfunction on 
a (real-analytic) 
Riemanian manifold $(M,g)$, then does there exist a Riemannian covering $(\tM, \tg)$
and a reflection isometry of $(\tM, \tg)$ whose fixed point set contains a lift of $H$?  
\end{quest}


\end{document}